\DeclareMathOperator{\Span}{span}
\DeclareMathOperator{\Dim}{dim}
\DeclareMathOperator{\Terms}{Terms}
\title{Independence, infinite dimension, and operators}
\author{Nizar El Idrissi and Samir Kabbaj}
\newcommand{\Addresses}{{
  \bigskip
  \footnotesize

  \textbf{Nizar El Idrissi.}
  \par\nopagebreak Laboratoire : Equations aux dérivées partielles, Algèbre et Géométrie spectrales.
  \par\nopagebreak
  Département de mathématiques, faculté des sciences, université Ibn Tofail, 14000 Kénitra.\par\nopagebreak 
  \textit{E-mail address} : \texttt{nizar.elidrissi@uit.ac.ma}

  \medskip

  \textbf{Pr. Samir Kabbaj.} \par\nopagebreak Laboratoire : Equations aux dérivées partielles, Algèbre et Géométrie spectrales.
  \par\nopagebreak
  Département de mathématiques, faculté des sciences, université Ibn Tofail, 14000 Kénitra.\par\nopagebreak 
  \textit{E-mail address} : \texttt{samir.kabbaj@uit.ac.ma}
}}
\theoremstyle{plain}
\newtheorem{proposition}{Proposition}[section]
\newtheorem{corollary}{Corollary}[section]
\newtheorem{lemma}{Lemma}[section]
\theoremstyle{definition}
\newtheorem{definition}{Definition}[section]
\newtheorem{example}{Example}[section]   
\theoremstyle{remark}
\def\keywords{\xdef\@thefnmark{}\@footnotetext}
\begin{document}
\newpage
\maketitle
\begin{abstract}
In [Appl. Comput. Harmon. Anal., 46(3):664–673, 2019] O. Christensen and M. Hasannasab observed that assuming the existence of an operator $T$ sending $e_n$ to $e_{n+1}$ for all $n \in \mathbb{N}$ (where  $(e_n)_{n \in \mathbb{N}}$ is a sequence of vectors) guarantees that $(e_n)_{n \in \mathbb{N}}$ is linearly independent if and only if $\dim \{e_n\}_{n \in \mathbb{N}} = \infty$. In this article, we recover this result as a particular case of a general order-theory-based model-theoretic result. We then return to the context of vector spaces to show that, if we want to use a condition like $T(e_i)=e_{\phi(i)}$ for all $i \in I$ where $I$ is countable as a replacement of the previous one, the conclusion will only stay true if $\phi : I \to I$ is conjugate to the successor function $succ : n \mapsto n+1$ defined on $\mathbb{N}$. We finally prove a tentative generalization of the result, where we replace the condition $T(e_i)=e_{\phi(i)}$ for all $i \in I$ where $\phi$ is conjugate to the successor function with a more sophisticated one, and to which we have not managed to find a new application yet.
\end{abstract}

\keywords{2020 \emph{Mathematics Subject Classification.} 15A03; 15A04; 06A12; 03C07.}
\keywords{\emph{Key words and phrases.} vector space, operator, linear independence, dimension, ordered structures, $\sigma$-structure.}


\tableofcontents

\section{Introduction}


Linear algebra is an entrenched subject of mathematics that started with the introduction of coordinates in geometry by René Descartes. Its modern theory emerged in the late nineteenth century after Peano gave the definition of a vector space. This theory makes heavy use ot the concepts of linear independence and dimension, which often allow to state important theorems and conjectures.

Generally, linear independence of an infinite sequence implies that it spans an infinite-dimensional space, but not the opposite. As a result, it is interesting to consider the conditions of a reverse statement. Such reverse statements may allow to solve standing problems on linear independence.

In \cite{ChristensenHasannasab}, O. Christensen and M. Hasannasab observed that assuming the existence of an operator $T$ sending $e_n$ to $e_{n+1}$ for all $n \in \mathbb{N}$ (where  $(e_n)_{n \in \mathbb{N}}$ is a sequence of vectors) guarantees that $(e_n)_{n \in \mathbb{N}}$ is linearly independent if and only if $\dim \{e_n\}_{n \in \mathbb{N}} = \infty$. To wit:

\begin{proposition} (O. Christensen and M. Hasannasab) \\
\label{propChrisHasan}
Let $E$ be a vector space and $(e_n)_{n \in \mathbb{N}}$ a family in $E$ indexed by $\mathbb{N}$. Then 
\[ \left( \exists T \in L(\Span\{e_n\}_{n \in \mathbb{N}}, E) : \forall n \in \mathbb{N} : T(e_n)=e_{n+1} \right) \text{ and } \dim\Span\{e_n\}_{n \in \mathbb{N}} = +\infty ] \Rightarrow \]
\[ \quad (e_n)_{n \in \mathbb{N}} \text{ is free.} \] 
\end{proposition}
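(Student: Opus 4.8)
The plan is to argue by contraposition: assuming that the operator $T$ exists but that $(e_n)_{n\in\mathbb{N}}$ fails to be free, I will deduce that $\Span\{e_n\}_{n\in\mathbb{N}}$ is finite-dimensional, contradicting the hypothesis $\dim\Span\{e_n\}_{n\in\mathbb{N}}=+\infty$.

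First I would use the fact that linear dependence of an infinite family is always witnessed by a finite subfamily, together with the well-ordering of $\mathbb{N}$: if $(e_n)_{n\in\mathbb{N}}$ is not free, there is a least $N\in\mathbb{N}$ for which the finite family $(e_n)_{n\le N}$ is linearly dependent. By minimality of $N$, the family $(e_n)_{n<N}$ is free while $e_N$ lies in its span, so one may fix scalars with $e_N=\sum_{k<N}c_k e_k$. The key step is then to push this relation forward under $T$: since $T$ is linear on $\Span\{e_n\}_{n\in\mathbb{N}}$ and $T(e_k)=e_{k+1}$ for every $k$, applying $T$ yields $e_{N+1}=\sum_{k<N}c_k e_{k+1}\in\Span\{e_k : 1\le k\le N\}\subseteq\Span\{e_k : k<N\}$, the last inclusion holding because $e_N$ itself is in that span. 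Hence $V:=\Span\{e_k:k<N\}$ already contains $e_N$ and $e_{N+1}$.

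A one-step induction now finishes the argument: if $e_m\in V$ for some $m\ge N$, write $e_m=\sum_{k<N}\alpha_k e_k$ and apply $T$ to get $e_{m+1}=\sum_{k<N}\alpha_k e_{k+1}\in\Span\{e_k : 1\le k\le N\}\subseteq V$. Therefore $e_m\in V$ for every $m\in\mathbb{N}$, so $\Span\{e_n\}_{n\in\mathbb{N}}=V$, a space of dimension at most $N$, which is the desired contradiction.

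I do not anticipate a genuine obstacle here, since the argument is elementary; the only points requiring a little care are (i) invoking well-ordering to extract the minimal $N$ and phrasing the induction hypothesis so that the span provably stabilizes, and (ii) making sure linearity of $T$ is only ever applied to elements of its stated domain $\Span\{e_n\}_{n\in\mathbb{N}}$, which is automatic because each $e_k$ and each partial linear combination lies there (and indeed $T$ maps this span into itself). It is worth flagging for the later sections that the proof uses essentially nothing about the successor beyond the fact that $T$ shifts the index upward by one, which is presumably what makes the generalization to $T(e_i)=e_{\phi(i)}$ delicate once $\phi$ is no longer conjugate to $succ$.
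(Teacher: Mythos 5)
Your proof is correct. Note, though, that the paper never proves Proposition \ref{propChrisHasan} by a direct vector-space computation: it attributes the statement to \cite{ChristensenHasannasab} and then \emph{recovers} it as the vector-space instance of an abstract chain of results --- Lemmas \ref{lemmaa0LEQp(b)ImpliesanLEQp(b)}--\ref{lemmaemLEQp(bm)Impliesem+nLEQp(bm)} about projections on preordered sets and join-semilattices, followed by the model-theoretic proposition of Section \ref{sectionApplicationModelTheory}, in which $p(X)=\Terms^{\mathcal{A}}[X]$ plays the role of your $\Span$. The underlying mechanism is the same: your induction step, $e_m\in V\Rightarrow e_{m+1}=T(e_m)\in\Span\{e_1,\dots,e_N\}\subseteq V$, is precisely the inequality chain $a_{n+1}\le f(a_n)\le f(p(b))\le p(f(b))\le p(a_0)\le p(p(b))=p(b)$ of Lemma \ref{lemmaa0LEQp(b)ImpliesanLEQp(b)}, instantiated with $f$ the direct-image map of $T$ and $p=\Span$. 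What your route buys is a short, self-contained, elementary argument, and you handle the two delicate points correctly: extracting the minimal $N$ by well-ordering, and justifying $\Span\{e_1,\dots,e_N\}\subseteq V$ via $e_N\in V$ (the degenerate case $N=0$, where $e_0=0$ and $V=\{0\}$, also goes through with empty sums). What the paper's abstraction buys is that the identical proof applies verbatim to arbitrary algebraic $\sigma$-structures (Corollary \ref{CorollaryEndExistsInfDimImpliesIndependence}), with ``span'' replaced by the set of interpreted terms. Your closing observation that nothing is used about the successor beyond the upward index shift is exactly the intuition developed in Section \ref{sectionOnlyPossibleCountableExtensions}.
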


In this article, we prove some additional results related to proposition \ref{propChrisHasan}. First, we recover proposition \ref{propChrisHasan} as a particular case of a general order-theory-based model-theoretic result. We then return to the context of vector spaces to show that, if we want to use a condition like $T(e_i)=e_{\phi(i)}$ for all $i \in I$ where $I$ is countable as a replacement of the previous one, the conclusion will only stay true if $\phi : I \to I$ is conjugate to the successor function $succ : n \mapsto n+1$ defined on $\mathbb{N}$. We finally prove a tentative generalization of the result, where we replace the condition $T(e_i)=e_{\phi(i)}$ for all $i \in I$ where $\phi$ is conjugate to the successor function with a more sophisticated one, and to which we have not managed to find a new application yet. \\ \\
\noindent \textbf{Plan of the article.} We dedicate section \ref{sectionNotation} to the notations. We then prove in section \ref{sectionLemmasOrderTheory} some order-theoretic lemmas. These lemmas will allow us to prove in the next section \ref{sectionApplicationModelTheory} a model-theoretic result and recover proposition \ref{propChrisHasan} as a particular case. We then return in section \ref{sectionOnlyPossibleCountableExtensions} to the context of vector spaces and show that proposition \ref{propChrisHasan} can at most be generalized in the countable case to families $(e_i)_{i \in I}$ indexed by a countable set $I$ and maps $\phi : I \to I$ that are conjugate to the successor function $succ : n \mapsto n+1$ defined on $\mathbb{N}$, at least if we want to preserve a condition like $T(e_i)=e_{\phi(i)}$ for all $i \in I$. We finally prove in section \ref{sectionTentativeGeneralization} a tentative generalization of the result, where we replace the condition $T(e_i)=e_{\phi(i)}$ for all $i \in I$ where $\phi$ is conjugate to the successor function with a more sophisticated one.

\section{Notations}
\label{sectionNotation}

In the sequel, $\mathbb{N}$ denotes the set $\{0,1,2,\cdots\}$ of natural numbers including 0. $\mathbb{N}^*$ denotes $\mathbb{N} \setminus \{0\}$. \\
If $A$ is a set, we denote by $|A|$ the cardinality of $A$, $\mathcal{P}(A)$ the powerset of $A$, $\mathcal{P}_{\omega}(A)$ the set $\{B \subseteq A : |B| < \infty\}$, and $\mathcal{P}_{\omega,*}(A)$ the set $\{B \subseteq A : 0 < |B| < \infty\}$. \\
If $A$ is a set, $\phi : A \to A$ a self map and $n \in \mathbb{N}^*$, we denote by $\phi^n$ the composition of $\phi$ with itself $n$ times : $\phi \circ \cdots \circ \phi : A \to A$. In addition, we define $\phi^0$ to be the identity function on $A$. Moreover, if $a \in A$, we denote by $Orb_\phi(a)$ the forward orbit of $a$ under the iterates of $\phi$ : $\{\phi^n(a) : n \in \mathbb{N}\}$. \\
If $E$ and $F$ are two vector spaces, $L(E,F)$ denotes the set of linear operators from $E$ to $F$. When $E=F$, we simply write $L(E)$. If $E$ is a vector space, we denote by $L(*,E)$ the \textbf{class} $\{ T \in L(E^*,E) : E^* \text{ is a vector space}\}$.

\section{Lemmas in order theory}
\label{sectionLemmasOrderTheory}

\begin{definition}
Consider some set $P$ and a binary relation $\leq$ on $P$. Then $\leq$ is a \textit{preorder} if it is reflexive and transitive; i.e., for all $a$, $b$ and $c$ in $P$, we have that:
\begin{itemize}
\item $a \leq a$ (reflexivity)
\item if $a \leq b$ and $b \leq c$ then $a \leq c$ (transitivity)
\end{itemize}
A set that is equipped with a preorder is called a \textit{preordered set}.
\end{definition}

\begin{definition}
Consider a preordered set $(P,\le)$ and a map $p : P \to P$. Then $p$ is called a \textit{projection} if for all $a$ and $b$ in $P$, we have that:
\begin{itemize}
\item $a \le b$ implies $p(a) \le p(b)$ (p is monotone/increasing/order-preserving/isotone)
\item $p(p(a))=p(a)$ (idempotence)
\end{itemize}
\end{definition}

\begin{lemma}
\label{lemmaa0LEQp(b)ImpliesanLEQp(b)}
Let $(P,\leq)$ be a preordered set, $(a_n)_{n \in \mathbb{N}}$ a sequence in $P$, $b \in P$ and $p$ a projection on $P$. \\
Suppose there exists an increasing map $f : P \to P$ such that $f(p(b)) \le p(f(b))$, $\forall n \in \mathbb{N} : a_{n+1} \le f(a_n)$,  and $f(b) \le a_0$. \\
Then: $(a_0 \le p(b)) \Rightarrow (\forall n \in \mathbb{N} : a_n \le p(b))$.
\end{lemma}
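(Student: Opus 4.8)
The plan is to prove this by induction on $n$, with the hypothesis $a_0 \le p(b)$ as the base case, and then show that $a_n \le p(b)$ implies $a_{n+1} \le p(b)$. The key observation is that the projection $p$ is idempotent and monotone, so applying $p$ to the chain of inequalities should let us bootstrap. Let me think about what tools I have: $f$ is increasing, $a_{n+1} \le f(a_n)$, $f(b) \le a_0$, and the compatibility condition $f(p(b)) \le p(f(b))$.

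**The inductive step.**

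First I would handle the base case, which is immediate since $a_0 \le p(b)$ is exactly the hypothesis of the implication we want to prove. For the inductive step, suppose $a_n \le p(b)$. The plan is to apply $f$ (which is monotone) to get $f(a_n) \le f(p(b))$. Then combine with $a_{n+1} \le f(a_n)$ via transitivity to get $a_{n+1} \le f(p(b))$. Now use the compatibility condition $f(p(b)) \le p(f(b))$ to get $a_{n+1} \le p(f(b))$. At this point I need to relate $p(f(b))$ back to $p(b)$. Here I would use $f(b) \le a_0 \le p(b)$ (the second inequality from the hypothesis $a_0 \le p(b)$), apply the monotone map $p$ to get $p(f(b)) \le p(p(b)) = p(b)$ by idempotence, and conclude $a_{n+1} \le p(b)$ by transitivity. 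This closes the induction.

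**Anticipated obstacle.**

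I expect the main subtlety — though it is minor — to be making sure the hypothesis $a_0 \le p(b)$ is actually used in the inductive step and not just the base case: it enters through the chain $f(b) \le a_0 \le p(b)$, which is what lets me collapse $p(f(b))$ down to $p(b)$. Without the assumed $a_0 \le p(b)$, I would only get $p(f(b)) \le p(a_0)$, which is not obviously $\le p(b)$. So the structure is that the hypothesis of the implication is used at every step of the induction, not merely to start it. Everything else is a routine chaining of monotonicity, idempotence, and transitivity, with no case analysis required.
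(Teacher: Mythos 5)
Your proof is correct and follows essentially the same route as the paper: the same induction, with the inductive step given by the chain $a_{n+1} \le f(a_n) \le f(p(b)) \le p(f(b)) \le p(p(b)) = p(b)$, using $f(b) \le a_0 \le p(b)$ and the monotonicity and idempotence of $p$ exactly as the paper does. Your remark that the hypothesis $a_0 \le p(b)$ is reused inside every inductive step, not only in the base case, is an accurate reading of the structure of the argument.
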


\begin{proof}
Indeed, suppose that $a_0 \le p(b)$. Let's show by induction that $\forall n \in \mathbb{N} : a_n \le p(b)$. The base case is the hypothesis $a_0 \le p(b)$. Suppose we have that $a_n \le p(b)$ for some $n \in \mathbb{N}$. Then $a_{n+1} \le f(a_n) \le f(p(b)) \le p(f(b)) \le p(a_0) \le p(p(b)) = p(b)$. Hence $a_n \le p(b)$ holds for all $n \in \mathbb{N}$ and the lemma is proved.
\end{proof}

\begin{definition}
Consider some set $P$ and a binary relation $\leq$ on $P$. Then $\leq$ is a \textit{partial order} if it is a preorder and for all $a$ and $b$ in $P$, we have $a \leq b \leq a$ implies $a = b$ (antisymmetry). \\
A set that is equipped with a partial order is called a \textit{partially ordered set} or \textit{poset}.
\end{definition}

\begin{definition}
A set $S$ partially ordered by the binary relation $\leq$ is a \textit{join-semilattice} if for all elements $x$ and $y$ of $S$, the smallest upper bound of the set $\{x,y\}$ exists in $S$. \\
The smallest upper bound of the set $\{x,y\}$ is called the join of $x$ and $y$, denoted $x \vee y$. 
\end{definition}

\begin{lemma}
\label{lemmaam0LEQp(bm)ImpliesamnLEQp(bm)}
Let $(S,\le,\vee)$ be a join-semilattice, $p$ a projection on $S$, $(a_{m,n})_{(m,n) \in \mathbb{N}^2}$ a double sequence in $S$, $(b_m)_{m \in \mathbb{N}}$ an increasing sequence in $S$ such that $\forall m \in \mathbb{N} : a_{m,0} \leq p(b_{m+1})$. \\
Suppose there exists $m_\bullet \in \mathbb{N}$ and an increasing map $f : S \to S$ such that $f(p(b_{m_\bullet})) \le p(f(b_{m_\bullet}))$, $\forall (m,n) \in \mathbb{N}^2 : a_{m,n+1} \le f(a_{m,n})$, and $f(b_{m_\bullet}) \le \bigvee_{i=0}^{m_\bullet} a_{i,0}$. \\
Then: $(a_{m_\bullet,0} \le p(b_{m_\bullet}) \Rightarrow (\forall n \in \mathbb{N} : a_{m_\bullet,n} \le p(b_{m_\bullet}))$.
\end{lemma}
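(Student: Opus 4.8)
The plan is to reduce Lemma~\ref{lemmaam0LEQp(bm)ImpliesamnLEQp(bm)} to Lemma~\ref{lemmaa0LEQp(b)ImpliesanLEQp(b)} by freezing the first index. Set $m := m_\bullet$ and consider the single sequence $c_n := a_{m_\bullet, n}$ for $n \in \mathbb{N}$, together with the fixed element $b := b_{m_\bullet}$. The hypotheses of Lemma~\ref{lemmaa0LEQp(b)ImpliesanLEQp(b)} require an increasing map $f$ with $f(p(b)) \le p(f(b))$, with $c_{n+1} \le f(c_n)$ for all $n$, and with $f(b) \le c_0$. The first two of these are immediate from the data of the present lemma: $f(p(b_{m_\bullet})) \le p(f(b_{m_\bullet}))$ is assumed verbatim, and $a_{m_\bullet, n+1} \le f(a_{m_\bullet, n})$ is the instance $m = m_\bullet$ of the assumption $\forall (m,n) : a_{m,n+1} \le f(a_{m,n})$. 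So the only thing that needs work is the third condition $f(b) \le c_0$, i.e. $f(b_{m_\bullet}) \le a_{m_\bullet, 0}$.

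To obtain that, first I would combine the assumption $f(b_{m_\bullet}) \le \bigvee_{i=0}^{m_\bullet} a_{i,0}$ with an estimate on the join. The idea is that each $a_{i,0}$ for $i \le m_\bullet$ is, after applying $p$, bounded by $p(b_{m_\bullet})$: indeed the standing hypothesis gives $a_{i,0} \le p(b_{i+1})$, and since $(b_m)$ is increasing and $i+1 \le m_\bullet$ whenever $i \le m_\bullet - 1$, monotonicity of $p$ gives $a_{i,0} \le p(b_{i+1}) \le p(b_{m_\bullet})$ for $i = 0, \dots, m_\bullet - 1$. For the remaining term $i = m_\bullet$ we have exactly the hypothesis of the implication we are proving, $a_{m_\bullet, 0} \le p(b_{m_\bullet})$. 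Hence all $m_\bullet + 1$ of the joinands lie below $p(b_{m_\bullet})$, so their join satisfies $\bigvee_{i=0}^{m_\bullet} a_{i,0} \le p(b_{m_\bullet})$ by the universal property of the join (a finite induction on the two-fold join $x \vee y$). Therefore $f(b_{m_\bullet}) \le \bigvee_{i=0}^{m_\bullet} a_{i,0} \le p(b_{m_\bullet})$.

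Now I am one short step from the goal, but note that this chain lands at $p(b_{m_\bullet})$, not at $a_{m_\bullet,0}$, so the hypothesis $f(b) \le c_0$ of Lemma~\ref{lemmaa0LEQp(b)ImpliesanLEQp(b)} is \emph{not} literally available. The cleaner route is to bypass that hypothesis and mimic the proof of Lemma~\ref{lemmaa0LEQp(b)ImpliesanLEQp(b)} directly, replacing the one place where $f(b) \le a_0$ was used by the weaker $f(b_{m_\bullet}) \le p(b_{m_\bullet})$ established above. Concretely: assume $a_{m_\bullet, 0} \le p(b_{m_\bullet})$ and induct on $n$. The base case is the assumption. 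For the step, assuming $a_{m_\bullet, n} \le p(b_{m_\bullet})$, chain
\[
a_{m_\bullet, n+1} \le f(a_{m_\bullet, n}) \le f(p(b_{m_\bullet})) \le p(f(b_{m_\bullet})) \le p(p(b_{m_\bullet})) = p(b_{m_\bullet}),
\]
where the fourth inequality uses monotonicity of $p$ applied to $f(b_{m_\bullet}) \le p(b_{m_\bullet})$, and the final equality is idempotence. This closes the induction and proves the lemma.

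The main obstacle, such as it is, is the bookkeeping around the shifted index in the standing hypothesis $a_{m,0} \le p(b_{m+1})$: one has to be careful that it yields $a_{i,0} \le p(b_{m_\bullet})$ only for $i \le m_\bullet - 1$ via the increasingness of $(b_m)$, while the top term $i = m_\bullet$ has to be supplied separately by the antecedent of the implication. Everything else is a routine transfer of the argument of Lemma~\ref{lemmaa0LEQp(b)ImpliesanLEQp(b)}, plus the elementary fact that a finite join of elements each below a common bound is itself below that bound.
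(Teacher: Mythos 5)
Your proof is correct. The decisive step --- bounding $\bigvee_{i=0}^{m_\bullet} a_{i,0}$ by $p(b_{m_\bullet})$ via $a_{i,0} \le p(b_{i+1}) \le p(b_{m_\bullet})$ for $i \le m_\bullet-1$ and the antecedent for $i=m_\bullet$ --- is exactly the paper's, and your induction chain is the one from Lemma~\ref{lemmaa0LEQp(b)ImpliesanLEQp(b)}. The difference is in the packaging. The paper makes Lemma~\ref{lemmaa0LEQp(b)ImpliesanLEQp(b)} applicable verbatim by passing to the cumulative joins $\widetilde{a_{m_\bullet,n}} := \bigvee_{i=0}^{m_\bullet} a_{i,n}$, for which $f(b_{m_\bullet}) \le \widetilde{a_{m_\bullet,0}}$ holds literally; this costs the extra verification $\widetilde{a_{m_\bullet,n+1}} \le f(\widetilde{a_{m_\bullet,n}})$, which rests on the inequality $\bigvee_i f(x_i) \le f(\bigvee_i x_i)$ for increasing $f$. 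You instead observe that the proof of Lemma~\ref{lemmaa0LEQp(b)ImpliesanLEQp(b)} uses $f(b) \le a_0$ only through the consequence $f(b) \le p(b)$, establish $f(b_{m_\bullet}) \le p(b_{m_\bullet})$ once from the join hypothesis, and rerun the induction directly on $(a_{m_\bullet,n})_{n}$. Your version is slightly leaner --- the join is needed only at level $n=0$ and no auxiliary sequence is introduced --- at the price of not reusing Lemma~\ref{lemmaa0LEQp(b)ImpliesanLEQp(b)} as a black box. Both arguments are sound, including in the edge case $m_\bullet = 0$, where the range $i \le m_\bullet - 1$ is empty and the join bound reduces to the antecedent itself.
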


\begin{proof}
For all $(m,n) \in \mathbb{N}^2$, let $\widetilde{a_{m,n}} =  \bigvee_{i=0}^m a_{i,n}$. Suppose that $a_{m_\bullet,0} \le p(b_{m_\bullet})$. Then $\widetilde{a_{m_\bullet,0}} = \bigvee_{i=0}^{m_\bullet} a_{i,0} \le p(b_{m_\bullet})$ since $\forall i \in [\![0,m_\bullet-1]\!] : a_{i,0} \le p(b_{i+1})$, $a_{m_\bullet,0} \le p(b_{m_\bullet})$, and $(p(b_i))_{i \in \mathbb{N}}$ is increasing. Moreover, $f$ being increasing implies $\forall a,b \in S : f(a) \vee f(b) \le f(a \vee b)$, and so $\forall n \in \mathbb{N} : \widetilde{a_{m_\bullet,n+1}} = \bigvee_{i=0}^{m_\bullet} a_{i,n+1} \le \bigvee_{i=0}^{m_\bullet} f(a_{i,n}) \le f(\bigvee_{i=0}^{m_\bullet} a_{i,n}) = f(\widetilde{a_{m_\bullet,n}})$. Therefore, applying lemma \ref{lemmaa0LEQp(b)ImpliesanLEQp(b)} to $(\widetilde{a_{m_\bullet,n}})_{n \in \mathbb{N}}$, $b_{m_\bullet}$, and $f$, we have that $\forall n \in \mathbb{N} : \widetilde{a_{m_\bullet,n}} \le p(b_{m_\bullet})$ which implies $\forall n \in \mathbb{N} : a_{m_\bullet,n} \le p(b_{m_\bullet})$.
\end{proof}

\begin{lemma}
\label{lemmaemLEQp(bm)Impliesem+nLEQp(bm)}
Let $(S,\le,\vee)$ be a join-semilattice, $p$ a projection on $S$, $(e_n)_{n \in \mathbb{N}}$ and $(b_n)_{n \in \mathbb{N}}$ two sequences in $S$ such that $(b_n)_{n \in \mathbb{N}}$ is increasing and $\forall n \in \mathbb{N} : e_n \leq p(b_{n+1})$. \\
Suppose there exists $m_\bullet \in \mathbb{N}$ and an increasing map $f : S \to S$ such that $f(p(b_{m_\bullet})) \le p(f(b_{m_\bullet}))$, $\forall n \in \mathbb{N} : e_{n+1} \le f(e_n)$, and $f(b_{m_\bullet}) \le \bigvee_{i=0}^{m_\bullet} e_i$. \\
Then: $(e_{m_\bullet} \le p(b_{m_\bullet})) \Rightarrow (\forall n \in \mathbb{N} : e_{m_\bullet+n} \le p(b_{m_\bullet}))$.
\end{lemma}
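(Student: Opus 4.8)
The plan is to deduce this lemma from Lemma 1.4 by constructing an appropriate double sequence. The key observation is that a single sequence $(e_n)_{n \in \mathbb{N}}$ with the shift property $e_{n+1} \le f(e_n)$ can be repackaged as a double sequence by "slicing" it along the diagonal. First I would define, for $(m,n) \in \mathbb{N}^2$, the element $a_{m,n} := e_{m+n}$. Then the recurrence hypothesis $\forall n \in \mathbb{N} : e_{n+1} \le f(e_n)$ immediately gives $\forall (m,n) \in \mathbb{N}^2 : a_{m,n+1} = e_{m+n+1} \le f(e_{m+n}) = f(a_{m,n})$, so the shift condition of Lemma 1.4 is met. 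Similarly, the hypothesis $\forall n \in \mathbb{N} : e_n \le p(b_{n+1})$ gives $\forall m \in \mathbb{N} : a_{m,0} = e_m \le p(b_{m+1})$, which is exactly the base condition of Lemma 1.4.

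Next I would check that the remaining hypotheses of Lemma 1.4 transfer with the \emph{same} $m_\bullet$ and the \emph{same} $f$. The condition $f(p(b_{m_\bullet})) \le p(f(b_{m_\bullet}))$ is identical in both lemmas, so nothing is needed there. For the last condition, Lemma 1.4 requires $f(b_{m_\bullet}) \le \bigvee_{i=0}^{m_\bullet} a_{i,0}$; but $\bigvee_{i=0}^{m_\bullet} a_{i,0} = \bigvee_{i=0}^{m_\bullet} e_i$, which is precisely the hypothesis $f(b_{m_\bullet}) \le \bigvee_{i=0}^{m_\bullet} e_i$ of the present lemma. Finally, $(b_m)_{m \in \mathbb{N}}$ is assumed increasing, as required. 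So all hypotheses of Lemma 1.4 hold for the double sequence $(a_{m,n})$.

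Applying Lemma 1.4, I obtain $(a_{m_\bullet,0} \le p(b_{m_\bullet})) \Rightarrow (\forall n \in \mathbb{N} : a_{m_\bullet,n} \le p(b_{m_\bullet}))$. Translating back via $a_{m_\bullet,n} = e_{m_\bullet + n}$, this reads $(e_{m_\bullet} \le p(b_{m_\bullet})) \Rightarrow (\forall n \in \mathbb{N} : e_{m_\bullet + n} \le p(b_{m_\bullet}))$, which is exactly the desired conclusion.

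I do not expect any real obstacle here: the lemma is essentially a specialization of Lemma 1.4, and the only thing to verify is that the diagonal-slicing substitution $a_{m,n} = e_{m+n}$ is consistent with every hypothesis. The one point deserving a moment's care is that the indexing in the base hypothesis matches up — $e_n \le p(b_{n+1})$ becomes $a_{m,0} \le p(b_{m+1})$ — and that the join $\bigvee_{i=0}^{m_\bullet} a_{i,0}$ collapses to $\bigvee_{i=0}^{m_\bullet} e_i$ rather than to something involving later terms; both are immediate from the definition.
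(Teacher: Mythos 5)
Your proof is correct and is exactly the paper's argument: both define the double sequence $a_{m,n} := e_{m+n}$ and reduce to the preceding lemma, with the same $m_\bullet$, $f$, and $(b_n)$. Your write-up simply spells out the hypothesis checks that the paper leaves implicit.
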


\begin{proof}
Define $a_{m,n}$ as $e_{m+n}$ for all $(m,n) \in \mathbb{N}^2$. Then $((a_{m,n})_{(m,n) \in \mathbb{N}^2}, (b_n)_{n \in \mathbb{N}}, f)$ is a triple satisfying the conditions of lemma \ref{lemmaam0LEQp(bm)ImpliesamnLEQp(bm)} and so we deduce the result.
\end{proof}

\section{Application to model theory}
\label{sectionApplicationModelTheory}

In this section, we will use lemma \ref{lemmaemLEQp(bm)Impliesem+nLEQp(bm)} to prove a model-theoretic result. For a quick reference on model theory, see the book \cite{Hodges}. The model-theoretic result allows to recover proposition \ref{propChrisHasan} when we choose the $\sigma$-structure to be a vector space over a field. The idea of the following model-theoretic proposition is to consider an unsorted algebraic $\sigma$-structure $\mathcal{A} = (A,\sigma)$ and regard the map sending a set $X \subseteq A$ to the set $\Terms^{\mathcal{A}}[X]$ of interpreted terms with variables taken from $X$ as a special projection map on the boolean algebra $\mathcal{P}(A)$.

\begin{proposition}
Let $\sigma$ be an algebraic signature and $\mathcal{A} = (A,\sigma)$ a $\sigma$-structure. \\
Let $(e_n)_{n \in \mathbb{N}}$ be a sequence of elements in $A$. \\
Suppose there exists a map $f \in End_\sigma(A)$ such that $\forall n \in \mathbb{N} : f(e_n) = e_{n+1}$. \\
Then: $(\exists m \in \mathbb{N} : e_m \in \Terms^{\mathcal{A}}[e_0,\cdots,e_{m-1}]) \Rightarrow (\exists m \in \mathbb{N} : \forall n \in \mathbb{N} : e_{m+n} \in \Terms^{\mathcal{A}}[e_0,\cdots,e_{m-1}])$.
\end{proposition}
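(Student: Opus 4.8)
The plan is to read the statement through Lemma~\ref{lemmaemLEQp(bm)Impliesem+nLEQp(bm)}, applied to the join-semilattice $(\mathcal{P}(A),\subseteq,\cup)$. The relevant projection is $p:\mathcal{P}(A)\to\mathcal{P}(A)$, $p(X)=\Terms^{\mathcal{A}}[X]$, i.e. $p(X)$ is the underlying set of the substructure of $\mathcal{A}$ generated by $X$. First I would check that $p$ is a projection in the sense of Section~\ref{sectionLemmasOrderTheory}: it is monotone (more variables produce more interpreted terms), and idempotent, since a term whose variables are themselves interpreted terms over $X$ collapses, after substitution, to an interpreted term over $X$, so $\Terms^{\mathcal{A}}[\Terms^{\mathcal{A}}[X]]=\Terms^{\mathcal{A}}[X]$; I would also record the extensivity $X\subseteq p(X)$, valid because variables are terms.

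Next I would translate the data of the proposition into the format the lemma wants. I take the lemma's sequence ``$e_n$'' to be the singletons $\{e_n\}\in\mathcal{P}(A)$, and the lemma's ``$b_n$'' to be $b_n:=\{e_0,\dots,e_{n-1}\}$ (so $b_0=\emptyset$); then $(b_n)_{n\in\mathbb{N}}$ is increasing for $\subseteq$, and the hypothesis $\{e_n\}\subseteq p(b_{n+1})$ holds trivially since $e_n\in\{e_0,\dots,e_n\}\subseteq\Terms^{\mathcal{A}}[\{e_0,\dots,e_n\}]$. For the lemma's increasing map I take the direct-image map $\widehat{f}:\mathcal{P}(A)\to\mathcal{P}(A)$, $\widehat{f}(X)=\{f(x):x\in X\}$, induced by the endomorphism $f$; it is monotone, and $\widehat{f}(\{e_n\})=\{f(e_n)\}=\{e_{n+1}\}$, so in particular $\{e_{n+1}\}\subseteq\widehat{f}(\{e_n\})$.

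Then I would verify the three $m_\bullet$-dependent hypotheses of Lemma~\ref{lemmaemLEQp(bm)Impliesem+nLEQp(bm)}, choosing for $m_\bullet$ the very index $m$ given by the assumption $e_m\in\Terms^{\mathcal{A}}[e_0,\dots,e_{m-1}]$. The inclusion $\widehat{f}(b_m)\subseteq\bigcup_{i=0}^{m}\{e_i\}$ is immediate, as $\widehat{f}(b_m)=\{e_1,\dots,e_m\}\subseteq\{e_0,\dots,e_m\}$. The inclusion $\widehat{f}(p(b_m))\subseteq p(\widehat{f}(b_m))$ is the one real model-theoretic ingredient: it says a $\sigma$-endomorphism sends the substructure generated by $X$ into the substructure generated by $f[X]$, i.e. interpreted terms commute with homomorphisms, $f\!\left(t^{\mathcal{A}}(a_1,\dots,a_k)\right)=t^{\mathcal{A}}(f(a_1),\dots,f(a_k))$, which is a routine induction on the term $t$. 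With these in hand, the hypothesis $e_m\in\Terms^{\mathcal{A}}[e_0,\dots,e_{m-1}]$ is exactly $\{e_m\}\subseteq p(b_m)$, and the lemma yields $\{e_{m+n}\}\subseteq p(b_m)$ for every $n\in\mathbb{N}$, i.e. $e_{m+n}\in\Terms^{\mathcal{A}}[e_0,\dots,e_{m-1}]$ for all $n$ — the desired conclusion, with the same witness $m$.

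I do not expect a genuine obstacle here: the combinatorial heart has already been isolated in Lemma~\ref{lemmaemLEQp(bm)Impliesem+nLEQp(bm)}, and what remains are the two ``soft'' verifications about $\Terms^{\mathcal{A}}[\cdot]$ (idempotence, and commutation with $\sigma$-endomorphisms), both immediate inductions on term complexity, plus the bookkeeping of keeping the three layers straight — the elements $e_n\in A$, their singletons in $\mathcal{P}(A)$, and the finite sets $b_n$ — when feeding them to the lemma. I would close by noting that specializing $\sigma$ to the signature of vector spaces over a fixed field, so that $\Terms^{\mathcal{A}}[X]=\Span X$ and $f$ ranges over linear endomorphisms, turns this proposition into (the contrapositive of) Proposition~\ref{propChrisHasan}: if $(e_n)_{n\in\mathbb{N}}$ is not free then some $e_m$ lies in $\Span\{e_0,\dots,e_{m-1}\}$, whence all $e_{m+n}$ lie in that fixed finite-dimensional subspace, forcing $\dim\Span\{e_n\}_{n\in\mathbb{N}}<\infty$.
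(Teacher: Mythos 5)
Your proposal is correct and follows essentially the same route as the paper: the same join-semilattice $(\mathcal{P}(A),\subseteq,\cup)$, the same projection $p(X)=\Terms^{\mathcal{A}}[X]$, the same $b_n=\{e_0,\dots,e_{n-1}\}$, singleton sequence, and direct-image map induced by $f$, all fed into Lemma~\ref{lemmaemLEQp(bm)Impliesem+nLEQp(bm)} with $m_\bullet$ the witness of the hypothesis. If anything, you spell out slightly more than the paper does (the idempotence of $p$ and the commutation of interpreted terms with $\sigma$-endomorphisms), which is harmless.
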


\begin{proof}
Notice that $(\mathcal{P}(A),\subseteq,\cup,\cap,\emptyset,A)$ is a boolean algebra and so a join-semilattice $(\mathcal{P}(A),\subseteq,\cup)$. Define $p$ as the map sending a set $X \subseteq A$ to the set $\Terms^{\mathcal{A}}[X]$ of interpreted terms with variables taken from $X$, and $b_n = \{e_0,\cdots,e_{n-1}\}$. The sequence $(e_n)_{n \in \mathbb{N}}$ in $A$ induces a sequence of singletons $(\{e_n\})_{n \in \mathbb{N}}$ in $\mathcal{P}(A)$. Also, the endomorphism $f$ induces the direct image map $f_\bullet = \begin{cases} \mathcal{P}(A) &\to \mathcal{P}(A) \\ X &\mapsto f(X) \end{cases}$ which is increasing, satisfies $\forall X \in \mathcal{P}(A) : f_\bullet(p(X)) = p(f_\bullet(X))$, $\forall i \in \mathbb{N} : \{e_{i+1}\} = f_\bullet(\{e_i\})$, and $f_\bullet(\{e_0,\cdots,e_{m-1}\}) = \{f(e_0),\cdots,f(e_{m-1})\} = \{e_1,\cdots,e_m\} \subseteq \bigcup_{i=0}^m \{e_i\}$. Moreover $(b_n)_{n \in \mathbb{N}}$ is increasing and $\forall n \in \mathbb{N} : \{e_n\} \subseteq \Terms^{\mathcal{A}}[e_0,\cdots,e_n]$. Suppose $\exists m_\bullet \in \mathbb{N} : e_{m_\bullet} \in \Terms^{\mathcal{A}}[e_0,\cdots,e_{m_\bullet-1}]$. Then the implication follows from lemma \ref{lemmaemLEQp(bm)Impliesem+nLEQp(bm)} applied to $(S,\le,\vee) := (\mathcal{P}(A),\subseteq,\cup)$, $p$, $(\{e_n\})_{n \in \mathbb{N}}$, $(b_n)_{n \in \mathbb{N}}$, $f_\bullet$, and $m_\bullet$.
\end{proof}

The previous proposition can be better appreciated after considering its contrapositive:

\begin{corollary}
\label{CorollaryEndExistsInfDimImpliesIndependence}
Let $\sigma$ be an algebraic signature and $\mathcal{A} = (A,\sigma)$ a $\sigma$-structure. \\
Let $(e_n)_{n \in \mathbb{N}}$ be a sequence of elements in $A$. \\
Suppose there exists a map $f \in End_\sigma(A)$ such that $\forall n \in \mathbb{N} : f(e_n) = e_{n+1}$. \\
Then: $(\forall m \in \mathbb{N} : \exists \varphi(m) \geq m : e_{\varphi(m)} \notin \Terms^{\mathcal{A}}[e_0,\cdots,e_{m-1}]) \Rightarrow (\forall m \in \mathbb{N} : e_m \notin \Terms^{\mathcal{A}}[e_0,\cdots,e_{m-1}])$.
\end{corollary}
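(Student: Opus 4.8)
The plan is to derive this corollary as the literal contrapositive of the preceding proposition, so the only real work is to check that the quantifier manipulation goes through cleanly. First I would observe that the hypothesis on $(e_n)_{n \in \mathbb{N}}$ and the existence of $f \in End_\sigma(A)$ with $f(e_n) = e_{n+1}$ is the same in both statements, so it may be kept fixed; what needs to be transformed is the implication $(\exists m : e_m \in \Terms^{\mathcal{A}}[e_0,\dots,e_{m-1}]) \Rightarrow (\exists m : \forall n : e_{m+n} \in \Terms^{\mathcal{A}}[e_0,\dots,e_{m-1}])$.

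Taking the contrapositive, the negation of the conclusion $\exists m : \forall n : e_{m+n} \in \Terms^{\mathcal{A}}[e_0,\dots,e_{m-1}]$ is $\forall m : \exists n : e_{m+n} \notin \Terms^{\mathcal{A}}[e_0,\dots,e_{m-1}]$. Here I would set $\varphi(m) := m+n$ for the witnessing $n$; since $n \in \mathbb{N}$, this $\varphi(m)$ satisfies $\varphi(m) \geq m$, and conversely any $\varphi(m) \geq m$ can be written $m+n$ with $n = \varphi(m)-m \in \mathbb{N}$. So the negated conclusion is exactly $\forall m : \exists \varphi(m) \geq m : e_{\varphi(m)} \notin \Terms^{\mathcal{A}}[e_0,\dots,e_{m-1}]$, which is the hypothesis of the corollary. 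Similarly, the negation of the hypothesis $\exists m : e_m \in \Terms^{\mathcal{A}}[e_0,\dots,e_{m-1}]$ is $\forall m : e_m \notin \Terms^{\mathcal{A}}[e_0,\dots,e_{m-1}]$, which is the conclusion of the corollary.

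Thus the corollary is precisely ``(not conclusion of Proposition) $\Rightarrow$ (not hypothesis of Proposition)'', which is logically equivalent to the Proposition itself, already proved. I would write this up in two or three lines, spelling out the bijection $n \leftrightarrow \varphi(m) = m+n$ so that the reader sees the two formulations of the negated conclusion match. The only point requiring a moment's care — and the closest thing to an obstacle — is making sure the dependence of the witness on $m$ is handled honestly: in the proposition's conclusion the same $m$ is shared by all $n$, so its negation correctly has $\forall m$ outermost with the witness $\varphi(m)$ (equivalently $n$) allowed to depend on $m$, which is exactly what the notation $\varphi(m)$ in the corollary encodes. No further computation is needed.
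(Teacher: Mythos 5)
Your proposal is correct and is exactly how the paper obtains the corollary: it is stated as the contrapositive of the preceding proposition, with the witness $\varphi(m)\geq m$ playing the role of $m+n$, and the quantifier bookkeeping you spell out is the only content needed.
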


\begin{example}
Consider the case of a vector space over a field, with its classical signature described for instance in \cite{Hodges} pp. 3-4. Then the corollary means that if we have an infinite sequence $(e_n)_{n \in \mathbb{N}}$ of vectors and a linear map with the property $f(e_n)=e_{n+1}$ for all $n \in \mathbb{N}$, then in order to show the linear independence of the sequence $(e_n)_{n \in \mathbb{N}}$, it is sufficient to prove that it spans an infinite-dimensional space. This is the result  appearing in the paper \cite{ChristensenHasannasab} by O. Christensen and M. Hasannasab (proposition \ref{propChrisHasan} of the present article).
\end{example}

\begin{corollary}
\label{corPropChrisHas}
Let $E$ be an infinite-dimensional vector space, $e \in E$, and $S$ a linear operator in $E$. Define the infinite sequence $(e_n)_{n \in \mathbb{N}}$ in $E$ as $e_n = S^n(e)$ for all $n \in \mathbb{N}$. Suppose that $\dim \Span (e_n)_{n \in \mathbb{N}} = \infty$. Then $(e_n)_{n \in \mathbb{N}}$ is free.
\end{corollary}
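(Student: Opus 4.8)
The plan is to obtain this as a direct application of Corollary \ref{CorollaryEndExistsInfDimImpliesIndependence}, taking the $\sigma$-structure to be $E$ itself over its base field, with the usual vector-space signature $\sigma$ (the one used in the example above). Before invoking the corollary I would record the two dictionary entries between the model-theoretic and the linear-algebraic vocabularies: the $\sigma$-endomorphisms of a vector space are exactly its linear self-maps (preserving addition and all the scalar multiplications is the same as being linear), so that $S \in End_\sigma(E)$; and for any finite list $x_1,\dots,x_k \in E$ one has $\Terms^{\mathcal{A}}[x_1,\dots,x_k] = \Span\{x_1,\dots,x_k\}$, with the convention $\Terms^{\mathcal{A}}[\,] = \{0\}$ when the list is empty (every closed term of the vector-space signature evaluates to $0$). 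Neither identification is deep, but both deserve to be stated explicitly since the whole argument is essentially a change of language.

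Next I would check the hypotheses of Corollary \ref{CorollaryEndExistsInfDimImpliesIndependence} for $f := S$ and the given sequence. The relation $f(e_n) = e_{n+1}$ is immediate from $e_n = S^n(e)$, since $S(S^n(e)) = S^{n+1}(e)$. It then remains to verify the antecedent, namely $\forall m \in \mathbb{N} : \exists \varphi(m) \ge m : e_{\varphi(m)} \notin \Span\{e_0,\dots,e_{m-1}\}$. Fixing $m$, the subspace $\Span\{e_0,\dots,e_{m-1}\}$ has dimension at most $m$, hence is a proper subspace of $\Span\{e_n\}_{n\in\mathbb{N}}$ because the latter is infinite-dimensional; so some $e_n$ lies outside it, and any such $n$ is automatically $\ge m$ (otherwise $e_n$ would be one of $e_0,\dots,e_{m-1}$). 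Setting $\varphi(m)$ equal to one such $n$ yields the antecedent.

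Corollary \ref{CorollaryEndExistsInfDimImpliesIndependence} then delivers $\forall m \in \mathbb{N} : e_m \notin \Span\{e_0,\dots,e_{m-1}\}$. To finish, I would observe that this condition is precisely the linear independence of $(e_n)_{n\in\mathbb{N}}$: a family is free if and only if none of its terms lies in the span of the terms preceding it, since from any nontrivial finite linear relation one can solve for the term of largest index occurring in it (this also handles $m=0$, where $\Span\emptyset = \{0\}$ forces $e_0 \neq 0$). This completes the proof. I do not expect a genuine obstacle here: the only points requiring a little care are the bookkeeping identification $\Terms^{\mathcal{A}}[\cdot] = \Span\{\cdot\}$ together with the empty-list convention, and the remark that the index witnessing non-membership is forced to be $\ge m$ — both routine.
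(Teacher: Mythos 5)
Your proof is correct and follows essentially the same route as the paper: the key observation in both is that $S(e_n)=e_{n+1}$, after which the paper simply cites Proposition \ref{propChrisHasan}, whose own justification via Corollary \ref{CorollaryEndExistsInfDimImpliesIndependence} is exactly the argument you spell out (the dictionary $\Terms^{\mathcal{A}}[x_1,\dots,x_k]=\Span\{x_1,\dots,x_k\}$, the verification of the antecedent, and the equivalence of the consequent with linear independence). You have merely inlined that citation and supplied details the paper leaves implicit in its example on vector spaces, and all of those details check out.
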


\begin{proof}
We have $\forall n \in \mathbb{N} : S(e_n) = e_{n+1}$ and we conclude by proposition \ref{propChrisHasan}.
\end{proof}


We do not know if it is possible to formulate a definition of infinite dimension or independence in the context of general algebraic $\sigma$-structures as in corollary \ref{CorollaryEndExistsInfDimImpliesIndependence}. If possible, corollary \ref{CorollaryEndExistsInfDimImpliesIndependence} may be advantageously applied to other algebraic $\sigma$-structures like groups, rings, algebras, etc., and have the same intuitive meaning of establishing a link between infinite dimension and independence.

\section{The only possible countable extensions}
\label{sectionOnlyPossibleCountableExtensions}

Before establishing the main proposition \ref{propCharacPEiPhiOrbPhi}, we need to recall some well-known lemmas.

\begin{lemma} 
\label{lemmaDistinct}
Let $I$ be an infinite set, $a \in I$ and $\phi : I \to I$. \\
Suppose that $Orb_\phi(a)$ is infinite. \\
Then $a, \phi(a), \phi(\phi(a)), \cdots$ are distinct.
\end{lemma}

\begin{proof}
We use euclidean division. Suppose that $\phi^{n}(a) = \phi^{m}(a)$ for $n < m$. By induction, we have $\phi^{n+j}(a) = \phi^{m+j}(a)$ for all $j \in \mathbb{N}$. Let $e \geq n$. Let $e - n = q(m-n) + r$ be the division with remainder of $e - n \in \mathbb{N}$ by $m-n \in \mathbb{N}^*$. If $q \geq 1$, we have $\phi^e(a) = \phi^{n + q(m-n) + r}(a) = \phi^{m + (q-1)(m-n) + r}(a) = \phi^{n + (q-1)(m-n) + r}(a)$. By immediate induction, we have that $\phi^e(a) = \phi^{n+r}(a)$, where $0 \leq r < m-n$. So $Orb_\phi(a) = \{a, \phi(a), \cdots, \phi^{m-1}(a)\}$ is finite, contradiction.
\end{proof}

\begin{lemma}
\label{lemmaTwoOrbCofAndInfExistPeriods}
Let $I$ be an infinite set, $(a,b) \in I^2$ and $\phi : I \to I$. \\
Suppose that $Orb_\phi(a)$ is infinite and $Orb_\phi(b)$ is cofinite. \\
Then $(\exists (m,n) \in \mathbb{N}^2) : \phi^m(a) = \phi^n(b)$.
\end{lemma}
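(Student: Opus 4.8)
The plan is to argue by a simple cardinality (pigeonhole) argument: the forward orbit of $a$ is infinite, so it cannot be contained in the finite complement of the forward orbit of $b$; hence the two orbits must intersect, and any common point yields the desired equality.

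First I would unwind the hypotheses. Saying that $Orb_\phi(b)$ is cofinite means precisely that $I \setminus Orb_\phi(b)$ is a finite subset of $I$. Since $Orb_\phi(a) \subseteq I$, we have
\[ Orb_\phi(a) \cap Orb_\phi(b) = Orb_\phi(a) \setminus \bigl(I \setminus Orb_\phi(b)\bigr), \]
which is an infinite set with a finite set removed, hence still infinite; in particular it is nonempty. Note that for this step I use only that $Orb_\phi(a)$ is infinite as a subset of $I$; Lemma \ref{lemmaDistinct} is not strictly needed here, although it is what explains that the infinitude of $Orb_\phi(a)$ is equivalent to the iterates $a, \phi(a), \phi^2(a), \dots$ being pairwise distinct.

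Then I would pick any element $x \in Orb_\phi(a) \cap Orb_\phi(b)$. By the definition of the forward orbit given in Section \ref{sectionNotation}, $x \in Orb_\phi(a)$ provides some $m \in \mathbb{N}$ with $x = \phi^m(a)$, and $x \in Orb_\phi(b)$ provides some $n \in \mathbb{N}$ with $x = \phi^n(b)$. Therefore $\phi^m(a) = \phi^n(b)$, which is exactly the conclusion.

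I do not anticipate any genuine obstacle in this proof; the only points requiring a little care are purely notational, namely that $Orb_\phi(\cdot)$ denotes the \emph{forward} orbit $\{\phi^k(\cdot) : k \in \mathbb{N}\}$ as fixed in Section \ref{sectionNotation}, and that cofiniteness of $Orb_\phi(b)$ is taken relative to the ambient infinite set $I$. Once these conventions are in place, the argument reduces to the one-line set-theoretic computation above.
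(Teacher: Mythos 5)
Your proof is correct and is essentially the same argument as the paper's: the paper phrases it as a proof by contradiction (if the orbits never meet, the infinite set $Orb_\phi(a)$ would sit inside the finite set $I \setminus Orb_\phi(b)$), while you run the identical cardinality observation directly to produce a common point. No substantive difference.
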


\begin{proof}
Assume by way of contradiction that $(\forall (m,n) \in \mathbb{N}^2 : \phi^m(a) \neq \phi^n(b))$. Then $Orb_\phi(a) \subseteq I \setminus Orb_\phi(b)$. But this is impossible since $Orb_\phi(a)$ is infinite and $I \setminus Orb_\phi(b)$ is finite. Hence the result.
\end{proof}

\begin{lemma}
\label{lemmaOrbPhiIConjugate}
Let $I$ be a countably infinite set and $\phi : I \to I$. Then 
\begin{align*}
\left( \exists a \in I : Orb_\phi(a) = I \right) &\Leftrightarrow ( \phi \text{ is conjugate to } succ : \begin{cases} \mathbb{N} &\to \mathbb{N} \\ n &\mapsto n+1 \end{cases} \text{ in the sense that }\\
&\quad \exists \alpha : \mathbb{N} \to I \text{ such that } \alpha \text{ is bijective and } \alpha \circ succ = \phi \circ \alpha ).
\end{align*}
\end{lemma}

\begin{proof}
($\Rightarrow$) Suppose that $\exists a \in I : Orb_\phi(a) = I$. Define $\alpha : \begin{cases} \mathbb{N} &\to I \\ n &\mapsto \phi^n(a) \end{cases}$. By lemma \ref{lemmaDistinct}, $\alpha$ is bijective. Moreover, we have clearly $\alpha \circ succ = \phi \circ \alpha$. \\
($\Leftarrow$) Suppose $\phi$ is conjugated to $succ : \begin{cases} \mathbb{N} &\to \mathbb{N} \\ n &\mapsto n+1 \end{cases}$ by $\alpha$. Then we have $Orb_\phi(\alpha(0)) = \{\phi^n(\alpha(0))\}_{n \in \mathbb{N}} = \{\alpha(succ^n(0))\}_{n \in \mathbb{N}} = \{\alpha(n)\}_{n \in \mathbb{N}} = I$.
\end{proof}

The following proposition is the main result of this section.

\begin{proposition}
\label{propCharacPEiPhiOrbPhi}
Let $E$ be an infinite dimensional vector space and $I$ a countably infinite set. \\
For all $(e_i)_{i \in I} \in E^I$ and $\phi : I \to I$, let $P((e_i)_{i \in I},\phi)$ be the proposition 
\begin{align*}
(\exists T \in L(\Span(e_i)_{i \in I},E) : &\forall i \in I : T(e_i) = e_{\phi(i)} \text{ and } \dim\Span(e_i)_{i \in I} = +\infty )  \\
&\Rightarrow (e_i)_{i \in I} \text{ is free}. 
\end{align*}
Then we have
\[ \forall \phi : I \to I : \left[ \left( \forall (e_i)_{i \in I} \in E^I : P((e_i)_{i \in I},\phi) \right) \Leftrightarrow \exists a \in I : Orb_\phi(a) = I \right] \]
\end{proposition}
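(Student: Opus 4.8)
The statement is an equivalence, to be proved for every self-map $\phi : I \to I$ on a countably infinite set $I$. I would treat the two implications separately.

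For the direction $(\Leftarrow)$, assume $\exists a \in I : Orb_\phi(a) = I$. By Lemma \ref{lemmaOrbPhiIConjugate}, $\phi$ is conjugate to $succ$ via a bijection $\alpha : \mathbb{N} \to I$, so $\alpha \circ succ = \phi \circ \alpha$. Given any family $(e_i)_{i \in I}$ and any $T \in L(\Span(e_i)_{i \in I}, E)$ with $T(e_i) = e_{\phi(i)}$ for all $i$, I would transport everything along $\alpha$: set $f_n := e_{\alpha(n)}$ for $n \in \mathbb{N}$. Then $T(f_n) = T(e_{\alpha(n)}) = e_{\phi(\alpha(n))} = e_{\alpha(n+1)} = f_{n+1}$, and $\Span(f_n)_{n \in \mathbb{N}} = \Span(e_i)_{i \in I}$ (since $\alpha$ is a bijection), so the span is still infinite-dimensional. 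Now Proposition \ref{propChrisHasan} applies to $(f_n)_{n \in \mathbb{N}}$, giving that $(f_n)_{n \in \mathbb{N}}$ is free; re-indexing by the bijection $\alpha$ shows $(e_i)_{i \in I}$ is free. Hence $P((e_i)_{i \in I}, \phi)$ holds for every family, which is the left side of the inner equivalence.

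For the direction $(\Rightarrow)$ I would argue by contraposition: assume no $a \in I$ has $Orb_\phi(a) = I$, and exhibit a specific family $(e_i)_{i \in I} \in E^I$ for which $P((e_i)_{i \in I}, \phi)$ fails, i.e.\ one that admits an operator $T$ with $T(e_i) = e_{\phi(i)}$, spans an infinite-dimensional subspace, yet is \emph{not} free. The natural candidate is to pick a basis $(u_j)_{j \in J}$ of $E$ (here $\dim E = \infty$ is used) and define $e_i$ so that the only linear relations among the $e_i$ are exactly those forced by the dynamics of $\phi$. Concretely, I would consider the quotient of the free vector space on $I$ by the subspace generated by the relations $\{x_i - x_{\phi(i)} : i \in I\}$ would not quite work directly since we need $e_i \in E$; instead, map each $\phi$-orbit-equivalence-class appropriately. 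The key structural fact is that since no forward orbit is all of $I$, the "orbit equivalence relation" (the smallest equivalence relation identifying $i$ with $\phi(i)$) either has more than one class, or has a single class on which $\phi$ acts with a cycle or with two distinct points having a common forward image — in each case one can arrange a nontrivial dependency. I would define $T$ on $\Span(e_i)$ by $e_i \mapsto e_{\phi(i)}$ and check this is well-defined precisely because the chosen relations are $\phi$-stable, then verify the span is still infinite-dimensional (this needs $I$ countably infinite together with the failure of surjectivity of every orbit, ensuring infinitely many "independent directions" survive), and finally observe that $(e_i)_{i \in I}$ is not free by construction.

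The main obstacle is the $(\Rightarrow)$ direction: one must do a careful case analysis on the structure of the orbit equivalence relation of $\phi$ when no single orbit exhausts $I$. The possible obstructions to $Orb_\phi(a) = I$ are (i) there are at least two distinct orbit-classes, (ii) there is one class but $\phi$ is eventually periodic on it (a "rho" shape or a pure cycle), or (iii) $\phi$ is injective on a single class but that class is a two-sided infinite chain or has a branching point. For each case I need to produce the counterexample family and, crucially, check that $\dim \Span(e_i)_{i \in I}$ remains infinite — this is the delicate point, since collapsing too many $e_i$ together could drop the dimension to finite and make $P$ vacuously true. I expect the cleanest route is: in case (i) put a genuine basis vector on one class and force the other class to be dependent while keeping infinitely many classes' worth of independent vectors around; in the periodic cases, the cycle itself yields a finite-dimensional dependency but leaves a cofinite independent "tail"; Lemmas \ref{lemmaDistinct} and \ref{lemmaTwoOrbCofAndInfExistPeriods} are exactly the combinatorial tools that let one locate these periods and common-image points, so I would lean on them to make the case analysis finite and explicit.
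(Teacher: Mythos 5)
Your $(\Leftarrow)$ direction is correct and is exactly the paper's argument: conjugate $\phi$ to $succ$ via Lemma \ref{lemmaOrbPhiIConjugate}, set $f_n = e_{\alpha(n)}$, and invoke Proposition \ref{propChrisHasan}. No issues there.

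The $(\Rightarrow)$ direction, however, is a plan rather than a proof, and the plan as stated has a genuine gap: you never actually produce the counterexample families, and the case analysis you propose is neither clearly exhaustive nor internally coherent (your case (iii) posits ``$\phi$ injective on a single class but \ldots{} has a branching point,'' which is contradictory, and the orbit-equivalence-class decomposition does not by itself separate the two genuinely different failure modes). The paper's proof shows that no case analysis on equivalence classes is needed; instead it runs two concrete constructions in sequence, both built from a fixed free family $(a_n)_{n \in \mathbb{N}}$ so that well-definedness of $T$ and infinite-dimensionality of the span are automatic. First, if some forward orbit $Orb_\phi(a)$ has infinite complement, set $e_i = 0$ for $i \in Orb_\phi(a)$ and $e_i = a_{u(i)}$ for $i$ in the complement ($u$ a bijection onto $\mathbb{N}$); then $T$ is defined on the free family $(a_n)$ by $T(a_n) = e_{\phi(u^{-1}(n))}$, the span is all of $\Span(a_n)_{n\in\mathbb{N}}$, and the family is not free because $e_a = 0$. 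This forces every orbit to be cofinite. Second, pick $a$ with $\lvert I \setminus Orb_\phi(a)\rvert$ minimal; Lemma \ref{lemmaTwoOrbCofAndInfExistPeriods} gives for each $b$ minimal $m(b), n(b)$ with $\phi^{m(b)}(b) = \phi^{n(b)}(a)$ and $n(b) \ge m(b)$, and one sets $e_i = a_{n(i)-m(i)}$ with $T(a_n) = a_{n+1}$; freeness of this family forces $Orb_\phi(a) = I$, since any $b \notin Orb_\phi(a)$ satisfies $e_b = e_{\phi^{n(b)}(a)}$ with $b \neq \phi^{n(b)}(a)$. Your sketch does correctly flag the two delicate points (well-definedness of $T$ and keeping the span infinite-dimensional), but resolving them is precisely where the work lies, and the quotient-of-a-free-vector-space idea you float (and then retract) does not address either. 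As written, the $(\Rightarrow)$ direction cannot be checked for correctness.
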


\begin{proof}
Let $\phi : I \to I$. \\
($\Rightarrow$) : Suppose that $\forall (e_i)_{i \in I} : P((e_i)_{i \in I},\phi)$. \\
Let's show first that all the orbits of $\phi$ are cofinite. Suppose by way of contradiction that $\exists a \in I : |I \setminus Orb_\phi(a)| = +\infty$. Let $(a_n)_{n \in \mathbb{N}}$ be a free family in $E$. We let $u : I \setminus Orb_\phi(a) \to \mathbb{N}$ be a bijection, and set for all $i \in I$, $e_i = \begin{cases} 0 \text{ if } i \in Orb_\phi(a) \\ a_{u(i)} \text{ otherwise} \end{cases}$. We define an operator on $\Span(e_i)_{i \in I}= \Span(a_n)_{n \in \mathbb{N}}$ by setting $T(a_n) = e_{\phi(u^{-1}(n))}$ for all $n \in \mathbb{N}$ and extending linearly. Then we have $\forall i \in I : T(e_i)=e_{\phi(i)}$. Indeed, if $i \in Orb_\phi(a)$, then $\phi(i) \in Orb_\phi(a)$ and so $e_i = e_{\phi(i)} = 0$ by definition which makes the relation true. Otherwise, we have $e_i = a_{u(i)}$ and so $T(e_i) = T(a_{u(i)}) = e_{\phi(u^{-1}(u(i)))} = e_{\phi(i)}$. Besides, we have $\dim\Span(e_i)_{i \in I} = +\infty$ since $\Span(e_i)_{i \in I} = \Span(a_n)_{n \in \mathbb{N}}$ and $\dim\Span(a_n)_{n \in \mathbb{N}} = +\infty$. Since $P((e_i)_{i \in I},\phi)$ is true, it follows that $(e_i)_{i \in I}$ is free which is impossible since $e_a = 0$. \\
Now take $a \in I$ such that $|I \setminus Orb_\phi(a) |$ is minimal. From lemma \ref{lemmaTwoOrbCofAndInfExistPeriods}, $\forall b \in I : \exists (m(b),n(b)) \in \mathbb{N}^2 : \phi^{m(b)}(b) = \phi^{n(b)}(a)$. If we choose $m(b)$ and $n(b)$ such that $m(b)+n(b)$ is minimal, then $n(b) \geq m(b)$ by minimality of $I \setminus Orb_\phi(a)$. Let $(a_n)_{n \in \mathbb{N}}$ be a free family in $E$. For all $i \in I$, we set $e_i = a_{n(i)-m(i)}$. We define an operator on $\Span(e_i)_{i \in I} = \Span(a_n)_{n \in \mathbb{N}}$ by setting $T(a_n) = a_{n+1}$ for all $n \in \mathbb{N}$ and extending linearly. Then we have $\forall i \in I : T(e_i) = e_{\phi(i)}$. Indeed, if $i \in Orb_\phi(a)$, then $m(i)=m(\phi(i))=0$ and $n(\phi(i)) = n(i)+1$ which implies $T(e_i)=T(a_{n(i)})=a_{n(i)+1}=a_{n(\phi(i))}=e_{\phi(i)}$. Otherwise, $m(i) \geq 1$, $m(\phi(i))=m(i)-1$ and $n(\phi(i))=n(i)$ since $\phi^{m(i)}(i) = \phi^{n(i)}(a)$ with $m(i)+n(i)$ minimal and $m(i) \geq 1$ implies $\phi^{m(i)-1}(\phi(i)) = \phi^{n(i)}(a)$ with $(m(i)-1)+n(i)$ minimal. So $T(e_i)=a_{n(i)-m(i)+1}=a_{n(i) - (m(i)-1)} = a_{n(\phi(i))-m(\phi(i))}=e_{\phi(i)}$. Besides, we have $\dim\Span(e_i)_{i \in I} = +\infty$ since $\Span(e_i)_{i \in I} = \Span(a_n)_{n \in \mathbb{N}}$ and $\dim\Span(a_n)_{n \in \mathbb{N}} = +\infty$. Since $P((e_i)_{i \in I},\phi)$ is true, it follows that $(e_i)_{i \in I}$ is free which implies $Orb_\phi(a) = I$ (otherwise, $\exists b \in I \setminus Orb_\phi(a)$, and so $e_b = a_{n(b)-m(b)} = e_{\phi^{n(b)}(a)}$ with $b \neq \phi^{n(b)}(a)$, contradicting the independence of $(e_i)_{i \in I}$). \\
($\Leftarrow$) : Suppose that $\exists a \in I : Orb_\phi(a) = I$. By lemma \ref{lemmaOrbPhiIConjugate}, $\phi$ is conjugated to $succ : \begin{cases} \mathbb{N} &\to \mathbb{N} \\ n &\mapsto n+1 \end{cases}$ by $\alpha$. Let $(e_i)_{i \in I} \in E^I$ and suppose there exists $T \in L(\Span(e_i)_{i \in I},E)$ such that for all $i \in I : T(e_i) = e_{\phi(i)}$ and $\dim\Span(e_i)_{i \in I} = +\infty$. Define $(f_n)_{n \in \mathbb{N}}$ by $f_n = e_{\alpha(n)}$ for all $n \in \mathbb{N}$. Then $(f_n)_{n \in \mathbb{N}}$ satisfies $\forall n \in \mathbb{N} : T(f_n)=f_{n+1}$ and $\dim\Span(f_n)_{n \in \mathbb{N}} = +\infty$ which by proposition \ref{propChrisHasan} implies that $(f_n)_{n \in \mathbb{N}}$ is free. Since $\alpha$ is a bijection, it follows that $(e_i)_{i \in I}$ is free.
\end{proof}

\section{A tentative generalization}
\label{sectionTentativeGeneralization}

\begin{definition}
Consider two sets $X$ and $Y$. A \textit{binary relation} $R$ on $X$ and $Y$ is a subset of the cartesian product $X \times Y$. The \textit{direct image} of a subset $S \subseteq X$ under a binary relation $R$ on $X$ and $Y$ is written $R[S]$ and refers to $\{y \in Y : \exists x \in S : (x,y) \in R\}$. If $X=Y$, a binary relation on $X$ and $Y$ is simply called a relation on $X$. The relation $\Delta_X := \{(x,x) : x \in X\}$ is called the \textit{identity relation} on $X$. The \textit{composition} of two binary relations $R_1$ and $R_2$ over $X$ and $Y$, and $Y$ and $Z$ (respectively), is the binary relation over $X$ and $Z$, denoted $R_1 \circ R_2$, and given by the subset $\{(x,z) \in X \times Z : \exists y \in Y : (x,y) \in R_1 \wedge (y,z) \in R_2\}$. If $R$ is a binary relation on $X$, we define $R^0$ as $\Delta_X$ and for all $n \geq 1$, $R^n$ as $R \circ R^{n-1}$.
\end{definition}

The following proposition which uses the language of functions with set-valued inputs or/and outputs is the main result of this section.

\begin{proposition}
\label{prop1}
Let $E$ be an infinite dimensional vector space, $V$ a finite dimensional subspace of $E$, $(e_i)_{i \in I}$ a family of vectors in $E$, and $J$ an infinite set. Suppose that
\begin{enumerate}
\item There exist two functions $u : \mathcal{P}_{\omega,*}(I) \to I$ and $G : \mathcal{P}_{\omega,*}(I) \to \mathcal{P}_{\omega,*}(I)$ such that for all $I^* \in \mathcal{P}_{\omega,*}(I)$, $u(I^*) \in I^*$ and $I^* \subseteq G(I^*)$,
\item There exists a function $T : \mathcal{P}_{\omega,*}(I) \times J \to L(*,E)$ such that for all $(I^*,j) \in \mathcal{P}_{\omega,*}(I) \times J$, $T(I^*,j) \in L(\Span(e_i)_{i \in G(I^*)},E)$,
\item There exists a finite subset $J_0$ of $J$ and a relation $R$ on $J$ such that $R[j] \subseteq J_0$ for all $j \in J_0$ and $(\forall j \in J)(\exists n_j \in \mathbb{N}) R^{n_j}[j] \subseteq J_0$,
\item $\forall i \in I : \Dim\Span\{T(I^*,j) e_i\}_{j \in J} = \infty$,
\item $\forall (I^*,j) \in \mathcal{P}_{\omega,*}(I) \times J : \forall i \in G(I^*) \setminus \{u(I^*)\}:$
\[T(I^*,j)e_i \in \Span\{T(I^*,j')e_{i'}\}_{(j',i')\in R[j] \times G(I^*)} + V.\]
\end{enumerate}
Then $(e_i)_{i \in I}$ is free.
\end{proposition}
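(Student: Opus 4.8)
The plan is to argue by contradiction: suppose $(e_i)_{i\in I}$ is \emph{not} free, so some finite nonempty subset is linearly dependent. I would then use the two structure functions $u$ and $G$ from hypothesis (1) to propagate this dependence. Concretely, pick a minimal (with respect to inclusion, or with respect to cardinality) nonempty finite $I^*\subseteq I$ such that $(e_i)_{i\in I^*}$ is dependent; then $e_{u(I^*)}$ lies in the span of $\{e_i\}_{i\in I^*\setminus\{u(I^*)\}}\subseteq\{e_i\}_{i\in G(I^*)\setminus\{u(I^*)\}}$. Applying the operator $T(I^*,j)$ for an arbitrary $j\in J$ — which is legitimate since $T(I^*,j)\in L(\Span(e_i)_{i\in G(I^*)},E)$ by (2), and $I^*\subseteq G(I^*)$ — we get that $T(I^*,j)e_{u(I^*)}$ is a linear combination of the vectors $T(I^*,j)e_i$ with $i\in G(I^*)\setminus\{u(I^*)\}$. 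Now hypothesis (5) rewrites each of those $T(I^*,j)e_i$ (for $i\in G(I^*)\setminus\{u(I^*)\}$) in terms of the vectors $T(I^*,j')e_{i'}$ with $j'\in R[j]$, $i'\in G(I^*)$, modulo the fixed finite-dimensional space $V$.

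The core of the argument is then an iteration along the relation $R$. Starting from any $j\in J$ and iterating hypothesis (5), after $n_j$ steps (with $n_j$ as in (3)) every vector $T(I^*,j)e_i$, $i\in G(I^*)$, gets expressed — modulo $V$ — as a linear combination of vectors $T(I^*,j_0)e_{i'}$ with $j_0\in R^{n_j}[j]\subseteq J_0$ and $i'\in G(I^*)$; the condition $R[j]\subseteq J_0$ for $j\in J_0$ guarantees we stay inside $J_0$ once we land there. Hence the countable (in fact finite-indexed-by-$J_0\times G(I^*)$) family $\{T(I^*,j_0)e_{i'} : j_0\in J_0,\ i'\in G(I^*)\}$, together with $V$, spans a fixed finite-dimensional subspace $W$ of $E$; and the previous paragraph shows $T(I^*,j)e_{u(I^*)}\in W$ for \emph{every} $j\in J$ (first reduce $T(I^*,j)e_{u(I^*)}$ to the $i\in G(I^*)\setminus\{u(I^*)\}$ vectors via the minimality-derived dependence, then push those into $W$). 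But hypothesis (4) says $\Dim\Span\{T(I^*,j)e_{u(I^*)}\}_{j\in J}=\infty$, contradicting $\{T(I^*,j)e_{u(I^*)}\}_{j\in J}\subseteq W$ with $\dim W<\infty$. This contradiction forces $(e_i)_{i\in I}$ to be free.

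I expect the main obstacle to be making the $R$-iteration in hypothesis (5) precise and showing it genuinely terminates in a \emph{uniformly} finite-dimensional space. The subtlety is that $n_j$ depends on $j$, so at first glance the spanning set might grow with $j$; the resolution is that every chain, regardless of length, terminates inside $J_0\times G(I^*)$, and $G(I^*)$ is a \emph{fixed} finite set (independent of $j$) and $J_0$ is finite, so the terminal spanning set $\{T(I^*,j_0)e_{i'}\}_{(j_0,i')\in J_0\times G(I^*)}\cup V$ is the same finite-dimensional $W$ for all $j$. One must also be careful that the coefficients in the repeated substitution stay well-defined (no circularity issues), which is why we track membership in the \emph{span} $W$ rather than attempting to control the coefficients themselves — only the final containment $T(I^*,j)e_{u(I^*)}\in W$ matters. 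A secondary point is verifying that the minimality choice of $I^*$ really yields $e_{u(I^*)}\in\Span\{e_i\}_{i\in I^*\setminus\{u(I^*)\}}$: since $u(I^*)\in I^*$ by (1) and $(e_i)_{i\in I^*}$ is a minimal dependent set, removing any single index leaves a free family, which is exactly what is needed.
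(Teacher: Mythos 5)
Your proposal is correct and follows essentially the same route as the paper: contradiction via a finite dependent set, application of $T(I^*,j)$, an induction along the powers $R^n[j]$ using hypothesis (5) with the dependency relation used at each stage to eliminate the $u(I^*)$-indexed terms, and a final contradiction with hypothesis (4) since everything lands in the fixed finite-dimensional space $\Span\{T(I^*,j')e_{i'}\}_{(j',i')\in J_0\times G(I^*)}+V$. The only cosmetic difference is that you obtain the nonvanishing of the coefficient of $e_{u(I^*)}$ from minimality of the dependent set, whereas the paper simply takes the support of a dependency relation; both are fine.
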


\begin{proof} (of proposition \ref{prop1})
Assume by way of contradiction that $(e_i)_{i \in I}$ is dependent. Then, there exists $I^* \in \mathcal{P}_{\omega,*}(I)$ and coefficients $\{c_i\}_{i \in I^*}$ in $\mathbb{C}^*$ such that 
\[ \sum_{i \in I^*} c_i e_i = 0, \] 
which implies
\[ e_{u(I^*)} = \sum_{i \in I^* \setminus \{u(I^*)\}} \frac{-c_i}{c_{u(I^*)}} e_i. \]
By linearity of $T(I^*,j)$, we then have for all $j \in J$
\begin{equation}
\label{equation11}
T(I^*,j)e_{u(I^*)} = \sum_{i \in I^* \setminus \{u(I^*)\}} \frac{-c_i}{c_{u(I^*)}} T(I^*,j) e_i. \tag{$*$}
\end{equation}
We will prove by induction on $n \in \mathbb{N}$ that 
\begin{equation}
\label{equation12}
\forall (n,j,i) \in \mathbb{N} \times J \times (G(I^*) \setminus \{u(I^*)\}) : T(I^*,j)e_i \in \Span \{T(I^*,j')e_i'\}_{(j',i')\in R^n[j] \times G(I^*)} + V. \tag{$**$}
\end{equation}
\begin{itemize}
\item For $n=0$, the relation is true since $R^0(j)=\Delta_J[j] = \{j\}$ and $i \in G(I^*)$ (take $(j',i'):=(j,i)$ and $v := 0$).
\item Suppose the induction hypothesis is true at the order $n \in \mathbb{N}$. So for all $(j,i) \in J \times (G(I^*) \setminus \{u(I^*)\})$, there exist complex coefficients $\{\alpha_{j',i'}^{j,i}\}_{(j',i')\in R[j] \times G(I^*)}$ and $\{\beta_{j',i'}^{j,i}\}_{(j',i')\in R^n[j] \times G(I^*)}$, and vectors $v^{j,i}$ and $w^{j,i}$ in $V$ such that 
\begin{equation}
\label{equation13}
T(I^*,j)e_i = \sum_{(j',i') \in R[j] \times G(I^*)} \alpha_{j',i'}^{j,i} T(I^*,j')e_{i'} + v^{j,i}, \tag{$***$}
\end{equation}
and 
\begin{equation}
\label{equation14}
T(I^*,j)e_i = \sum_{(j',i') \in R^n[j] \times G(I^*)} \beta_{j',i'}^{j,i} T(I^*,j')e_{i'} + w^{j,i}, \tag{$****$}
\end{equation}
because of condition 5 and the induction hypothesis. \\
Let $(j,i) \in J \times (G(I^*) \setminus \{u(I^*)\})$. We have
\begin{align*}
T(I^*,j)e_i &= \sum_{(j',i') \in R^n[j] \times G(I^*)} \beta_{j',i'}^{j,i} T(I^*,j')e_{i'} + w^{j,i} \\
&= \sum_{(j',i') \in R^n[j] \times (G(I^*) \setminus \{u(I^*)\}} \beta_{j',i'}^{j,i} T(I^*,j')e_{i'} \\
&\quad + \sum_{j' \in R^n[j]} \beta_{j',u(I^*)}^{j,i} T(I^*,j')e_{u(I^*)} + w^{j,i} \\
&= \sum_{(j',i') \in R^n[j] \times (G(I^*) \setminus \{u(I^*)\}} \beta_{j',i'}^{j,i} T(I^*,j')e_{i'} \\
& \quad \quad + \sum_{(j',i') \in R^n[j] \times (G(I^*) \setminus \{u(I^*)\})} \beta_{j',u(I^*)}^{j,i} \frac{-c_{i'}}{c_{u(I^*)}}T(I^*,j')e_{i'} + w^{j,i},
\end{align*}
where we have used equation \eqref{equation11} and extended $\{c_i\}_{i \in I^*}$ to $\{c_i\}_{i \in G(I^*)}$ by setting $\forall i' \in G(I^*)\setminus I^* : c_{i'}=0$.
Simplifying, we have
\begin{align*}
T(I^*,j)e_i &= \sum_{(j',i') \in R^n[j] \times (G(I^*) \setminus \{u(I^*)\}} (\beta_{j',i'}^{j,i} - \beta_{j',u(I^*)}^{j,i} \frac{c_{i'}}{c_{u(I^*)}}) T(I^*,j')e_{i'} + w^{j,i}.
\end{align*}
Using equality \eqref{equation13} for each term $T(I^*,j')e_{i'}$ and then rearranging the resulting sum, we have that $T(I^*,j)e_i$ is equal to
\begin{align*}
&\sum_{(j'',i'') \in R^{n+1}[j] \times G(I^*)} \left[ \sum_{(j',i') \in F(j'',j,n) \times G(I^*) \setminus \{u(I^*)\}} \alpha_{j'',i''}^{j',i'} \left(\beta_{j',i'}^{j,i} - \beta_{j',u(I^*)}^{j,i} \frac{c_{i'}}{c_{u(I^*)}} \right) \right] \\
&\cdot T(I^*,j'')f_{i''} + \left[ \sum_{(j',i') \in R^n[j] \times (G(I^*) \setminus \{u(I^*)\}} (\beta_{j',i'}^{j,i} - \beta_{j',u(I^*)}^{j,i} \frac{c_{i'}}{c_{u(I^*)}})v^{j',i'} + w^{j,i} \right] \\
&\in \Span\{T(I^*,j')e_{i'}\}_{(j',i')\in R^{n+1}[j] \times G(I^*)} + V,
\end{align*}
where $F(j'',j,n) := \{j' \in R^n[j] : j'' \in R[j'] \}$.
\end{itemize}
Hence the claim is proved. Now letting $j \in J$ and using condition 3, there exists $n_j \in \mathbb{N}$ such that $R^{n_j}[j] \subseteq J_0$, which implies by \eqref{equation11} and the claim \eqref{equation12} that 
\[ T(I^*,j)e_{u(I^*)} \in \Span\{T(I^*,j')e_{i'}\}_{(j',i')\in J_0 \times G(I^*)} + V. \]
Therefore
\[ \Span\{T(I^*,j)e_{u(I^*)}\}_{j \in J} \subseteq \Span\{T(I^*,j')e_{i'}\}_{(j',i')\in J_0 \times G(I^*)} + V. \]
Since $\Span\{T(I^*,j')e_{i'}\}_{(j',i')\in J_0 \times G(I^*)} + V$ is finite-dimensional, this contradicts condition 4.
\end{proof}

\begin{example} (from O. Christensen and M. Hasannab \cite{ChristensenHasannasab}) \label{exChrisHasan} \\
Let $E$ be an infinite-dimensional vector space and $(e_n)_{n \in \mathbb{N}}$ a sequence in $E$. \\ Suppose that $\Dim\Span(e_n)_{n \in \mathbb{N}} = \infty$ and there exists an operator $S : \Span(e_n)_{n \in \mathbb{N}} \to E$ such that $S(e_n) = e_{n+1}$ for all $n \in \mathbb{N}$. Then $(e_n)_{n \in \mathbb{N}}$ is free. \\
Indeed, define $u : \mathcal{P}_{\omega,*}(\mathbb{N}) \to \mathbb{N}$ and $G : \mathcal{P}_{\omega,*}(\mathbb{N}) \to \mathcal{P}_{\omega,*}(\mathbb{N})$ by $u(I^*) = \max(I^*)$ and $G(I^*) = [\![0,\max(I^*)]\!]$ for all $I^* \in \mathcal{P}_{\omega,*}(\mathbb{N})$. Moreover, define $T : \mathcal{P}_{\omega,*}(\mathbb{N}) \times \mathbb{N} \to L(*,E)$ by $T(I^*,m) = (S^m)_{|\Span(e_n)_{n \in [\![0,\max(I^*)]\!]}}$ for all $(I^*,m) \in \mathcal{P}_{\omega,*}(\mathbb{N}) \times \mathbb{N}$. Take $J_0 = \{0\}$ and define $R =  (0,0) \bigcup \left( \bigcup_{m \in \mathbb{N}^*} (m,m-1) \right) \subseteq \mathbb{N} \times \mathbb{N}$. Examining the conditions of proposition \ref{prop1}, we see that conditions 1-2 are valid, 3 is true because $R^m[m] = \{0\} = J_0$ for all $m \in \mathbb{N}$, 4 is valid because $\{T(I^*,m)e_{u(I^*)}\}_{m \in \mathbb{N}} = \{e_{u(I^*)+m}\}_{m \in \mathbb{N}}$ spans an infinite dimensional space since $(e_m)_{m \in \mathbb{N}}$ does, and finally condition 5 is also valid since for all $(I^*,m) \in \mathcal{P}_{\omega,*}(\mathbb{N}) \times \mathbb{N}^*$ and for all $n \in [\![0,\max(I^*)-1]\!]$ we have $T(I^*,m)e_n = e_{n+m} = T(I^*,m-1)e_{n+1}$ (notice that $m-1 \in R[m]$ and $n+1 \in [\![0,\max(I^*)]\!]$), and for all $I^* \in \mathcal{P}_{\omega,*}(\mathbb{N})$ and $n \in [\![0,\max(I^*)-1]\!]$ we have $T(I^*,0)e_n = e_n = T(I^*,0)e_n$ (notice that $0 \in R[0]$ and $n \in [\![0,\max(I^*)]\!]$). Hence the result.
\end{example}

Before we move on to the next example, we need to prove two additional lemmas.

\begin{lemma}
\label{lemmaOrbCofAlmostAllOrbCof}
Let $I$ be an infinite set, $(a,b) \in I^2$ and $\phi : I \to I$. \\
Suppose that $Orb_\phi(a)$ is cofinite. Then
\begin{enumerate}
\item If $\exists n \in \mathbb{N} : b = \phi^n(a)$, then $Orb_\phi(b)$ is cofinite,
\item If $\exists n \in \mathbb{N} : a = \phi^n(b)$, then $Orb_\phi(b)$ is cofinite.
\end{enumerate}
\end{lemma}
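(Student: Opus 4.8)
The plan is to treat the two items essentially in parallel, since both reduce to the same underlying fact: if $Orb_\phi(a)$ is cofinite and $a$ and $b$ are "linked" by a finite chain of iterates of $\phi$ (in either direction), then $Orb_\phi(b)$ differs from $Orb_\phi(a)$ only by finitely many elements, hence is also cofinite. First I would observe that in all cases $I \setminus Orb_\phi(a)$ is a fixed finite set; call it $F$. The goal is to bound $|I \setminus Orb_\phi(b)|$ in terms of $|F|$ and the length $n$ of the connecting chain.

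For item 1, suppose $b = \phi^n(a)$. Then $Orb_\phi(b) = \{\phi^k(b) : k \in \mathbb{N}\} = \{\phi^{n+k}(a) : k \in \mathbb{N}\} = \{\phi^m(a) : m \geq n\} = Orb_\phi(a) \setminus \{a, \phi(a), \dots, \phi^{n-1}(a)\}$ — with the caveat that some of those first $n$ iterates may reappear later in the orbit, so strictly $Orb_\phi(b) \supseteq Orb_\phi(a) \setminus \{a,\phi(a),\dots,\phi^{n-1}(a)\}$. Either way, $I \setminus Orb_\phi(b) \subseteq (I \setminus Orb_\phi(a)) \cup \{a, \phi(a), \dots, \phi^{n-1}(a)\} = F \cup \{a,\dots,\phi^{n-1}(a)\}$, which is finite. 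Hence $Orb_\phi(b)$ is cofinite.

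For item 2, suppose $a = \phi^n(b)$. Now $Orb_\phi(b) \supseteq \{\phi^k(b) : k \geq n\} = \{\phi^{k-n}(\phi^n(b)) : k \geq n\} = Orb_\phi(a)$, simply because every iterate of $a$ is an iterate of $b$. Therefore $I \setminus Orb_\phi(b) \subseteq I \setminus Orb_\phi(a) = F$, which is finite; so $Orb_\phi(b)$ is cofinite. (This direction is actually the easier of the two: the orbit only grows.)

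I do not expect any real obstacle here; the only point requiring a little care is the bookkeeping in item 1, where one must resist writing "$Orb_\phi(b) = Orb_\phi(a) \setminus \{a,\dots,\phi^{n-1}(a)\}$" as an equality — it need not be, if the orbit is eventually periodic and loops back through those early points. Using the containment $\subseteq$ for the complement is enough and sidesteps the issue entirely. An alternative, slightly slicker route would be to note that $Orb_\phi(a)$ cofinite forces $Orb_\phi(b)$ to be infinite (in item 1, because $\phi$ maps the cofinite set $Orb_\phi(a)$ into $Orb_\phi(b)$ up to finitely many points; in item 2 trivially), then invoke that an infinite $\phi$-orbit inside a set whose complement-to-$Orb_\phi(a)$ is controlled must itself be cofinite — but the direct complement-chasing argument above is cleaner and self-contained, so that is the one I would write out.
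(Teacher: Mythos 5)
Your proof is correct and follows essentially the same complement-chasing argument as the paper. The only difference is that by working with containments ($I \setminus Orb_\phi(b) \subseteq (I \setminus Orb_\phi(a)) \cup \{a,\dots,\phi^{n-1}(a)\}$ in item 1, $Orb_\phi(a) \subseteq Orb_\phi(b)$ in item 2) rather than the exact set equalities the paper writes down, you avoid having to invoke Lemma \ref{lemmaDistinct} to justify that the iterates $a,\phi(a),\phi^2(a),\dots$ are distinct --- a small but genuine simplification.
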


\begin{proof}
First, notice that by lemma \ref{lemmaDistinct}, $a, \phi(a), \phi^2(a), \cdots$ are distinct. \\
$(1) :$ Suppose $\exists n \in \mathbb{N} : b = \phi^n(a)$. Then $I \setminus Orb_\phi(b) = I \setminus \{\phi^m(a)\}_{m \geq n} = \left( I \setminus Orb_\phi(a) \right) \cup \{\phi^m(a)\}_{m < n}$ is finite. \\
$(2) :$ Suppose $\exists n \in \mathbb{N} : a = \phi^n(b)$. Then $I \setminus Orb_\phi(b) = \left( I \setminus \{\phi^m(b)\}_{m \geq n} \right) \setminus \{\phi^m(b)\}_{m < n} = \left( I \setminus Orb_\phi(a) \right) \setminus \{\phi^m(b)\}_{m < n}$ is finite.
\end{proof}

\begin{corollary}
\label{corollaryOrbPhiIAllOrbCof}
Let $I$ be an infinite set, $a \in I$ and $\phi : I \to I$. \\
Suppose that $Orb_\phi(a) = I$. \\
Then $\forall b \in I : Orb_\phi(b)$ is cofinite
\end{corollary}

\begin{lemma}
\label{lemmaCharacOrbPhiIAlmostPreservation}
Let $I$ be an infinite set and $\phi : I \to I$ such that $\forall a \in I : Orb_\phi(a)$ is infinite. Then we have
\begin{align*} 
&\left( \exists a \in I : Orb_\phi(a) = I  \right) \Rightarrow \\
&\quad \quad (\text{there exist two functions } u : \mathcal{P}_{\omega,*}(I) \to I \text{ and } G : \mathcal{P}_{\omega,*}(I) \to \mathcal{P}_{\omega,*}(I) \text{ such that} \\
&\quad \quad \forall I^* \in \mathcal{P}_{\omega,*}(I) : u(I^*) \in I^*, I^* \subseteq G(I^*), \text{ and } \phi(G(I^*) \setminus \{u(I^*)\}) \subseteq G(I^*) ).
\end{align*}
\end{lemma}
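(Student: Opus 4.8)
The plan is to use the hypothesis $Orb_\phi(a) = I$ to produce an enumeration of $I$ along the $\phi$-orbit of $a$, and then define $G(I^*)$ to be the smallest initial segment of that enumeration containing $I^*$, with $u(I^*)$ the last element of $I^*$ in that enumeration. By Lemma \ref{lemmaDistinct}, since $Orb_\phi(a) = I$ is infinite, the elements $a, \phi(a), \phi^2(a), \dots$ are pairwise distinct, so the map $\alpha : \mathbb{N} \to I$, $n \mapsto \phi^n(a)$, is a bijection (equivalently, invoke Lemma \ref{lemmaOrbPhiIConjugate}: $\phi$ is conjugate to $succ$ via $\alpha$). This bijection transports the usual order on $\mathbb{N}$ to a total order $\preceq$ on $I$, with $\phi$ acting as the successor map.

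Concretely, I would set, for each $I^* \in \mathcal{P}_{\omega,*}(I)$, $M(I^*) := \max\{ \alpha^{-1}(i) : i \in I^* \} \in \mathbb{N}$, and then define $u(I^*) := \alpha(M(I^*))$ and $G(I^*) := \alpha(\{0,1,\dots,M(I^*)\}) = \{a, \phi(a), \dots, \phi^{M(I^*)}(a)\}$. The first two required properties are then immediate: $u(I^*) \in I^*$ because $M(I^*) = \alpha^{-1}(i_0)$ for some $i_0 \in I^*$, hence $u(I^*) = i_0$; and $I^* \subseteq G(I^*)$ because every $i \in I^*$ has $\alpha^{-1}(i) \le M(I^*)$. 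Note $G(I^*)$ is finite and nonempty, so $G$ indeed lands in $\mathcal{P}_{\omega,*}(I)$.

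It remains to check the third property, $\phi(G(I^*) \setminus \{u(I^*)\}) \subseteq G(I^*)$. An element of $G(I^*) \setminus \{u(I^*)\}$ is $\phi^k(a)$ for some $k$ with $0 \le k \le M(I^*)$ and $k \ne M(I^*)$, i.e. $0 \le k \le M(I^*) - 1$; applying $\phi$ gives $\phi^{k+1}(a)$ with $1 \le k+1 \le M(I^*)$, which lies in $G(I^*)$ by definition. This is the only step with any content, and even it is essentially bookkeeping once the enumeration $\alpha$ is fixed; the reason $u(I^*)$ must be the top element is precisely that $\phi$ pushes the top element of the initial segment out of it, so we are forced to exclude it. I do not anticipate a genuine obstacle here — the lemma is a packaging of Lemma \ref{lemmaOrbPhiIConjugate} together with the trivial observation that $succ$ maps $\{0,\dots,m-1\}$ into $\{0,\dots,m\}$ — but care is needed to state $M(I^*)$, $u$, and $G$ in a way that makes all three conclusions transparent.
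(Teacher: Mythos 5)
Your construction is exactly the paper's: the paper sets $n(I^*)=\max\{n\in\mathbb{N}:\phi^n(a)\in I^*\}$ (well-defined by Lemma \ref{lemmaDistinct}), $u(I^*)=\phi^{n(I^*)}(a)$, and $G(I^*)=\{a,\phi(a),\dots,\phi^{n(I^*)}(a)\}$, which coincides with your $M(I^*)$, $u$, and $G$ via the bijection $\alpha$. Your verification of the three properties is correct and merely spells out what the paper leaves as "do satisfy the requirements."
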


\begin{proof}
Define
\[ n : \begin{cases} \mathcal{P}_{\omega,*}(I) &\to \mathbb{N} \\
                       I^* &\mapsto \max\{n \in \mathbb{N} : \phi^n(a) \in I^* \}
\end{cases} \]
which is well-defined because $\{a,\phi(a),\phi^2(a),\cdots\}$ are distinct (lemma \ref{lemmaDistinct}) and let 
\[ u : \begin{cases} \mathcal{P}_{\omega,*}(I) &\to I \\
                       I^* &\mapsto \phi^{n(I^*)}(a)
\end{cases} \]
and 
\[ G : \begin{cases} \mathcal{P}_{\omega,*}(I) &\to \mathcal{P}_{\omega,*}(I) \\
                       I^* &\mapsto \{a, \phi(a), \cdots, \phi^{n(I^*)}(a)\} 
\end{cases}. \]
With this choice, since $Orb_\phi(a)=I$, $u$ and $G$ do satisfy the requirements.
\end{proof}

\begin{example}
Let $E$ be an infinite dimension vector space, $I$ a countably infinite set, $(e_i)_{i \in I}$ a family in $E$ and $\phi : I \to I$ such that $\exists a \in I : Orb_\phi(a) = I$. \\
Suppose that $\Dim\Span\{e_i\}_{i \in I} = \infty$ and there exists an operator $S : \Span\{e_i\}_{i \in I} \to E$ such that $S(e_i) = e_{\phi(i)}$ for all $i \in I$. Then $\{e_i\}_{i \in I}$ is free. \\
Indeed, from lemma \ref{lemmaCharacOrbPhiIAlmostPreservation}, there exist two functions $u : \mathcal{P}_{\omega,*}(I) \to I$ and $G : \mathcal{P}_{\omega,*}(I) \to \mathcal{P}_{\omega,*}(I)$ such that for all $I^* \in \mathcal{P}_{\omega,*}(I)$, $u(I^*) \in I^*$, $I^* \subseteq G(I^*)$ and $\phi(G(I^*) \setminus \{u(I^*)\}) \subseteq G(I^*)$. Define $T : \mathcal{P}_{\omega,*}(I) \times \mathbb{N} \to L(*,E)$ by $T(I^*,m) = (S^m)_{|\Span\{e_i\}_{i \in G(I^*)}}$ for all $(I^*,m) \in \mathcal{P}_{\omega,*}(I) \times \mathbb{N}$. Take $J_0 = \{0\}$ and define $R =  (0,0) \bigcup \left( \bigcup_{m \in \mathbb{N}^*} (m,m-1) \right) \subseteq \mathbb{N} \times \mathbb{N}$. Examining the conditions of proposition \ref{prop1}, we see that conditions 1-2 are valid, 3 is true because $R^m[m] = \{0\} = J_0$ for all $m \in \mathbb{N}$, 4 is valid because $\{T(I^*,m)e_{u(I^*)}\}_{m \in \mathbb{N}} = \{e_{\phi^m(u(I^*))}\}_{m \in \mathbb{N}}$ spans an infinite-dimensional space since $(e_i)_{i \in I}$ does and $|I \setminus \{\phi^m(u(I^*))\}_{m \in \mathbb{N}}| < \infty$ (corollary \ref{corollaryOrbPhiIAllOrbCof}), and finally condition 5 is also valid since for all $(I^*,m) \in \mathcal{P}_{\omega,*}(I) \times \mathbb{N}^*$ and for all $i \in G(I^*) \setminus \{u(I^*)\}$ we have $T(I^*,m)e_i = e_{\phi^m(i)} = T(I^*,m-1)e_{\phi(i)}$ (notice that $m-1 \in R[m]$ and $\phi(i) \in G(I^*)$), and for all $I^* \in \mathcal{P}_{\omega,*}(I)$ and $i \in G(I^*) \setminus \{u(I^*)\}$ we have $T(I^*,0)e_i = e_i = T(I^*,0)e_i$ (notice that $0 \in R[0]$ and $i \in G(I^*)$). Hence the result.
\end{example}

\section*{Acknowledgement}
The first author is financially supported by the \textit{Centre National pour la Recherche Scientifique et Technique} of Morocco.

\section*{Conflict of interest}
On behalf of all authors, the corresponding author states that there is no conflict of interest.

\nocite{*}
\bibliographystyle{plain}
\bibliography{references}

\begin{thebibliography}{1}

\bibitem{ChristensenHasannasab}
O.~Christensen and M.~Hasannasab.
\newblock Frame properties of systems arising via iterated actions of
  operators.
\newblock {\em Appl. Comput. Harmon. Anal.}, 46(3):664--673, 2019.

\bibitem{Hodges}
W.~Hodges.
\newblock {\em A Shorter Model Theory}.
\newblock Cambridge University Press, 2017.

\bibitem{MoghaddamNajatiKhedmati}
J.~S. Moghaddam, A.~Najati, and Y.~Khedmati.
\newblock Fibonacci representations of sequences in hilbert spaces.
\newblock {\em U.P.B. Sci. Bull., Series A}, 83(1), 2021.

\end{thebibliography}

\Addresses

\end{document}